\documentclass[12pt]{article}
\usepackage{amsmath}
\usepackage{amssymb}
\usepackage{theorem}

    \usepackage{hyperref}

\sloppy

 \pagestyle{plain}

\textheight=8.5in
\textwidth=6.0in
\addtolength{\topmargin}{-.3in}
\addtolength{\oddsidemargin}{-.25in}

\newtheorem{thm}{Theorem}[section]
\newtheorem{prop}[thm]{Proposition}

\newcommand{\qed}{\hfill \mbox{\raggedright \rule{.07in}{.1in}}}
 
\newenvironment{proof}{\vspace{1ex}\noindent{\bf
Proof}\hspace{0.5em}}{\hfill\qed\vspace{1ex}}
\newenvironment{pfof}[1]{\vspace{1ex}\noindent{\bf Proof of
#1}\hspace{0.5em}}{\hfill\qed\vspace{1ex}}

\newcommand{\E}{{\mathbb E}}
\newcommand{\cB}{{\mathcal B}}
\newcommand{\cG}{{\mathcal G}}

\newcommand{\SMALL}{\textstyle}

\title{Nonexistence of spectral gaps in H\"older spaces for continuous time dynamical systems}

\author{Ian Melbourne\thanks{Mathematics Institute, University of Warwick, Coventry, CV4 7AL, UK}
\and Nicol\`o Paviato
\thanks{Mathematics Institute, University of Warwick, Coventry, CV4 7AL, UK}
 \and Dalia Terhesiu\thanks{Mathematisch Instituut,
University of Leiden, Niels Bohrweg 1, 2333 CA Leiden, Netherlands}}

\date{27 February 2021}

\begin{document}

 \maketitle

\begin{abstract}
We show that there is a natural restriction on the smoothness of spaces where the transfer operator for a continuous dynamical system has a spectral gap.
Such a space cannot be embedded in a H\"older space with
H\"older exponent greater than $\frac12$ unless it consists entirely of coboundaries.
\end{abstract}

 \section{Introduction} 
 \label{sec-intro}

Decay of correlations (rates of mixing) and strong statistical properties are well-understood for Axiom~A diffeomorphisms since the work of~\cite{Bowen75,Ruelle78,Sinai72}.
Mixing rates are computed with respect to any equilibrium measure with H\"older potential.
Up to a finite cycle, such diffeomorphisms have exponential decay of correlations for H\"older observables.   In the one-sided (uniformly expanding) setting, this is typically proved by establishing quasicompactness and a spectral gap for the associated transfer operator $L$.  Such a spectral gap yields a decay rate $\|L^n v-\int v\|\le C_v e^{-an}$ for $v$ H\"older, where $\|\;\|$ is a suitable H\"older norm and $a$, $C_v$ are positive constants.  Decay of correlations for H\"older observables is an immediate consequence of the decay for $L^n$.
This philosophy has been extended to large classes of nonuniformly expanding dynamical systems with exponential~\cite{Young98} and subexponential decay of correlations~\cite{Young99}.

For continuous time dynamical systems, the usual techniques~\cite{Dolgopyat98a,Liverani04,Pollicott85} bypass spectral gaps; the only exceptions that we know of being Tsujii~\cite{Tsujii08,Tsujii10}.  However, the result in~\cite{Tsujii08} is for suspension semiflows over the doubling map with a $C^3$ roof function, where the smoothness of the roof function is crucial and very restrictive.  A spectral gap for contact Anosov flows is obtained in~\cite{Tsujii10}; unfortunately it seems nontrivial to extend this to nonuniformly hyperbolic contact flows (or uniformly hyperbolic contact flows with unbounded distortion), see~\cite{BaladiDemersLiverani18} which proves exponential decay of correlations for billiard flows with a contact structure but does not establish a spectral gap.  Indeed, apart from~\cite{Tsujii08,Tsujii10}, there are no results on spectral gaps of transfer operators for semiflows and flows.

The results of Tsujii~\cite{Tsujii08,Tsujii10} provide a spectral gap in an anisotropic Banach space.  In this paper we obtain a restriction on the Banach spaces that can yield a spectral gap.  We work in the following general setting:

Let $(\Lambda,d)$ be a bounded metric space with Borel probability measure $\mu$, and
let $F_t:\Lambda\to\Lambda$ be a measure-preserving semiflow.
We suppose that $t\to F_t$ is Lipschitz a.e.\ on $\Lambda$.
Let $L_t:L^1(\Lambda)\to L^1(\Lambda)$ denote the transfer operator corresponding to $F_t$ (so $\int_\Lambda L_tv\,w\,d\mu=\int_\Lambda v\,w\circ F_t\,d\mu$ for all $v\in L^1(\Lambda)$, $w\in L^\infty(\Lambda)$, $t>0$).
Let $v\in L^\infty(\Lambda)$ and define $v_t=\int_0^t v\circ F_r\,dr$ for $t\ge0$.

\begin{thm}  \label{thm:anti}
Let $\eta\in(\frac12,1)$.
Suppose that $L_tv\in C^\eta(\Lambda)$ for all $t>0$ and that
$\int_0^\infty \|L_tv\|_\eta\,dt<\infty$.
Then $v_t$ is a coboundary:
\[
v_t=\chi\circ F_t-\chi \quad\text{for all $t\ge0$, a.e.\ on $\Lambda$}
\]
where $\chi=\int_0^\infty L_tv\,dt\in C^\eta(\Lambda)$.
In particular, $\sup_{t\ge0}|v_t|_\infty<\infty$.
\end{thm}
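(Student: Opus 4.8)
The plan is to identify $v_t$ with the candidate coboundary $\chi\circ F_t-\chi$ by an $L^2$ argument: the difference of the two is orthogonal to a Koopman-invariant subspace, and this orthogonality makes a suitable $L^2$-norm additive in $t$, which together with a mild regularity estimate forces that norm to vanish. To set up, put $\chi=\int_0^\infty L_tv\,dt$, which converges in $C^\eta(\Lambda)$ by the integrability hypothesis, so $\chi\in C^\eta(\Lambda)$. Write $b_t=\chi\circ F_t-\chi$ and $w_t=v_t-b_t$; all three lie in $L^\infty(\Lambda)\subset L^2(\Lambda)$, and the theorem reduces to showing $w_t=0$ a.e.\ for every $t\ge0$. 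From $v_t=\int_0^tv\circ F_r\,dr$ one reads off the cocycle identity $v_{s+t}=v_s+v_t\circ F_s$; the coboundary $b_t$ satisfies the same identity by telescoping, hence so does $w_t$, namely $w_{s+t}=w_s+w_t\circ F_s$.

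The first substantive step is the identity $L_tw_t=0$ for all $t\ge0$. Using $L_r(v\circ F_r)=v\cdot L_r1=v$ and the semigroup law $L_t=L_{t-r}L_r$ for $0\le r\le t$, we get $L_t(v\circ F_r)=L_{t-r}v$, hence $L_tv_t=\int_0^tL_{t-r}v\,dr=\int_0^tL_uv\,du$. Similarly $L_t(\chi\circ F_t)=\chi\cdot L_t1=\chi$ and $L_t\chi=\int_0^\infty L_{t+u}v\,du=\int_t^\infty L_uv\,du$, so $L_tb_t=\chi-L_t\chi=\int_0^tL_uv\,du$. Subtracting gives $L_tw_t=0$. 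The interchanges of $L_t$ with the integrals are justified because $L_t$ is a contraction on $L^1$ and the integrals converge in $C^\eta(\Lambda)\hookrightarrow L^1(\Lambda)$.

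The crux is a Pythagorean identity for $\|w_t\|_2$. Since $F_s$ preserves $\mu$, composition with $F_s$ is an $L^2$-isometry; and, by the defining property of the transfer operator together with the previous step, $\int_\Lambda w_s\,(w_t\circ F_s)\,d\mu=\int_\Lambda(L_sw_s)\,w_t\,d\mu=0$. Squaring $w_{s+t}=w_s+w_t\circ F_s$ in $L^2$ therefore yields $\|w_{s+t}\|_2^2=\|w_s\|_2^2+\|w_t\|_2^2$, so $\phi(t):=\|w_t\|_2^2$ is a nonnegative additive function on $[0,\infty)$; being nonnegative it is nondecreasing, and an additive nondecreasing function is linear, $\phi(t)=t\phi(1)$. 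On the other hand $\|v_t\|_2\le t\,|v|_\infty$, and $\chi\in C^\eta$ together with the a.e.-Lipschitz bound $d(F_tx,x)=O(t)$ gives $\|b_t\|_2=O(t^\eta)$; since $\eta<1$ this yields $\|w_t\|_2=O(t^\eta)$, i.e.\ $\phi(t)=O(t^{2\eta})$. Hence $\phi(1)=n\,\phi(1/n)=O(n^{1-2\eta})\to0$ as $n\to\infty$, the decay being genuine \emph{precisely because $\eta>\tfrac12$}. Therefore $\phi\equiv0$, so $w_t=0$ a.e.\ for all $t$, i.e.\ $v_t=\chi\circ F_t-\chi$ a.e.; and then $|v_t|_\infty\le2|\chi|_\infty<\infty$ uniformly in $t$.

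I expect the one genuinely non-obvious ingredient to be the Pythagorean identity — equivalently, the observation that $L_tw_t=0$ says exactly that $w_t$ is orthogonal to the Koopman image $\{g\circ F_t:g\in L^2\}$, which is what makes $\|w_t\|_2^2$ additive along the cocycle. The more transparent manipulations (for instance $v_t=L_sv_{s+t}-L_sv_s$, or Ces\`aro-averaging the relation $w_t=L_sw_{s+t}$) are tautological and carry no information. Everything else — the transfer-operator identities $L_r(v\circ F_r)=v$ and $L_t(\chi\circ F_t)=\chi$, the interchanges with integrals, and fixing the precise sense of ``$t\mapsto F_t$ Lipschitz a.e.'' so that $\|b_t\|_2=O(t^\eta)$ — is routine.
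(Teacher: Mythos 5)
Your argument is correct in substance and reaches the conclusion by a genuinely more elementary route than the paper. Both proofs rest on the same two pillars: the Gordin-type identity $L_t m_t=0$ for $m_t=v_t-\chi\circ F_t+\chi$ (your $w_t$; your computation of $L_tv_t$ and $L_t(\chi\circ F_t-\chi)$ is word for word Proposition~\ref{prop:anti}(ii)), and the incompatibility of H\"older regularity with exponent $\eta>\frac12$ with the square-additivity that this identity forces. The paper packages the second step as continuous-time martingale theory: it forms the reverse martingale $M_T(t)=m_t\circ F_{T-t}$ with respect to $\cG_{T,t}=F_{T-t}^{-1}\cB$, observes that $C^\eta$ sample paths kill the quadratic variation $[M_T]$ a.e., and concludes $\E(M_T^2)=\E([M_T])=0$ via the stochastic-integral identity. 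Your Pythagorean identity $\phi(s+t)=\phi(s)+\phi(t)$ is precisely the orthogonality of the increments of that martingale, i.e.\ the discrete skeleton $\E(M_T(T)^2)=\E(S_n(T))=n\,\phi(T/n)$ of the quadratic-variation argument; you then finish by estimating $\phi(T/n)$ directly rather than passing to the limit $[M_T]$. What this buys is a self-contained proof with no stochastic calculus whatsoever.

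The one place where the two arguments are not equivalent is the short-time estimate, and it is exactly the point you dismiss as routine. You need $\|\chi\circ F_t-\chi\|_2=O(t^\eta)$, an $L^2(\mu)$ bound, which requires the a.e.\ Lipschitz constant $C(x)$ of $t\mapsto F_tx$ to satisfy $\int_\Lambda C^{2\eta}\,d\mu<\infty$ (for instance, to be uniform in $x$). The paper's standing hypothesis, as used in Proposition~\ref{prop:anti}(i), is only that $C(x)<\infty$ a.e.; the quadratic-variation route tolerates this because pointwise a.e.\ convergence $S_n\to0$ suffices to force $[M_T]\equiv0$, the return to expectations being effected by the martingale identity $\E(M_T^2)=\E([M_T])$ rather than by integrating the pathwise bound (note $n\,m_{1/n}^2$ is not dominated, since $|m_{1/n}|$ is only $O(1)$ uniformly, so you cannot simply pass the a.e.\ bound through $\phi(1)=n\phi(1/n)$). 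So either strengthen the Lipschitz assumption to a uniform (or $L^{2\eta}$-integrable) constant --- harmless in the intended applications --- or replace the bound $\phi(1/n)=O(n^{-2\eta})$ by the paper's pointwise-plus-martingale step. With that caveat, the proof is complete.
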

Here, $|g|_\infty=\operatorname{ess\, sup}_\Lambda |g|$ and $\|g\|_\eta=|g|_\infty+\sup_{x\neq y}|g(x)-g(y)|/d(x,y)^\eta$.

Theorem~\ref{thm:anti} implies that any Banach space admitting a spectral gap and embedded in
$C^\eta(\Lambda)$ for some $\eta>\frac12$ is cohomologically trivial.
However, for (non)uniformly expanding semiflows and (non)uniformly hyperbolic flows of the type in the aforementioned references,
coboundaries are known to be exceedingly rare, see for example~\cite[Section~2.3.3]{CFKMZ19}.  Hence, Theorem~\ref{thm:anti} can be viewed as an ``anti-spectral gap'' result for such continuous time dynamical systems.

\section{Proof of Theorem~\ref{thm:anti}}
\label{sec:anti}

Let $v\in L^\infty(\Lambda)$, with $L_tv\in C^\eta(\Lambda)$ for all $t>0$ and
$\int_0^\infty \|L_tv\|_\eta\,dt<\infty$ where $\eta\in(\frac12,1)$.
Following Gordin~\cite{Gordin69} we consider a martingale-coboundary decomposition.
Define $\chi=\int_0^\infty L_tv\,dt\in C^\eta(\Lambda)$, and
\[
v_t=\int_0^t v\circ F_r\,dr, \qquad
m_t=v_t-\chi\circ F_t+\chi,
\]
for $t\ge0$.
Let $\cB$ denote the Borel $\sigma$-algebra on $\Lambda$.

\begin{prop} \label{prop:anti}
\noindent (i) $t\to m_t$ is $C^\eta$ a.e.\ on $\Lambda$.

\vspace{1ex}
\noindent (ii) $\E(m_t|F_t^{-1}\cB)=0$ for all $t\ge0$.
\end{prop}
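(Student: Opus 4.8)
The plan is to prove the two parts separately, part~(ii) being the substantive one. Throughout I will use the elementary identity
$\E(\phi\mid F_t^{-1}\cB)=(L_t\phi)\circ F_t$ for $\phi\in L^1(\Lambda)$:
the right-hand side is $F_t^{-1}\cB$-measurable, and testing against an arbitrary bounded $F_t^{-1}\cB$-measurable function $w\circ F_t$ gives, by $F_t$-invariance of $\mu$ and the defining property of $L_t$,
$\int (L_t\phi)(w)\circ F_t\,d\mu=\int (L_t\phi)\,w\,d\mu=\int\phi\,(w\circ F_t)\,d\mu$, which identifies it as the conditional expectation. Two special cases I will invoke are $L_r1=1$ (from invariance) and hence, using $L_r(\phi\cdot\psi\circ F_r)=\psi\cdot L_r\phi$, the relation $L_r(v\circ F_r)=v\cdot L_r1=v$.

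For part~(i), I would track the $t$-regularity of $m_t=v_t-\chi\circ F_t+\chi$ pointwise a.e. Fix $x$ in the full-measure set on which $r\mapsto F_rx$ is (locally) Lipschitz, say with constant $C_x$. Then $|v_t(x)-v_s(x)|\le|v|_\infty|t-s|$, so $t\mapsto v_t(x)$ is Lipschitz; the term $\chi(x)$ is constant in $t$; and since $\chi\in C^\eta(\Lambda)$,
$|\chi(F_tx)-\chi(F_sx)|\le\|\chi\|_\eta\,d(F_tx,F_sx)^\eta\le\|\chi\|_\eta\,C_x^\eta|t-s|^\eta$, so $t\mapsto\chi(F_tx)$ is $\eta$-H\"older. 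Since $\eta<1$ and time intervals are bounded, the sum $t\mapsto m_t(x)$ is then $C^\eta$ for a.e.\ $x$.

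For part~(ii), I would compute $\E(m_t\mid F_t^{-1}\cB)$ term by term. The term $\chi\circ F_t$ is already $F_t^{-1}\cB$-measurable, hence unchanged. For $\chi$, the identity above gives $\E(\chi\mid F_t^{-1}\cB)=(L_t\chi)\circ F_t$, and by the semigroup property $L_{t+s}=L_tL_s$ and a change of variables $L_t\chi=\int_0^\infty L_{t+s}v\,ds=\int_t^\infty L_uv\,du$. For $v_t$, I would first pass the conditional expectation through the $dr$-integral to get $\int_0^t\E(v\circ F_r\mid F_t^{-1}\cB)\,dr$; then for each $r\le t$, $\E(v\circ F_r\mid F_t^{-1}\cB)=(L_t(v\circ F_r))\circ F_t$, and combining $L_r(v\circ F_r)=v$ with $L_t=L_{t-r}L_r$ yields $L_t(v\circ F_r)=L_{t-r}v$. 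Hence $\E(v_t\mid F_t^{-1}\cB)=\bigl(\int_0^tL_{t-r}v\,dr\bigr)\circ F_t=\bigl(\int_0^tL_sv\,ds\bigr)\circ F_t$. Adding the three contributions and using $\int_0^tL_sv\,ds+\int_t^\infty L_uv\,du=\chi$ gives $\E(m_t\mid F_t^{-1}\cB)=\chi\circ F_t-\chi\circ F_t=0$.

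The computation itself is short; the points requiring care — and the main (mild) obstacle — are the measure-theoretic interchanges: passing $\E(\cdot\mid F_t^{-1}\cB)$ through the $dr$-integral, and passing $L_t$ through the $ds$-integrals defining $\chi$ and $L_t\chi$. These are justified because the hypothesis $\int_0^\infty\|L_sv\|_\eta\,ds<\infty$ makes the integrals converge as Bochner integrals in $C^\eta(\Lambda)$, hence in $L^1(\Lambda)$, on which both $L_t$ and $\E(\cdot\mid F_t^{-1}\cB)$ are bounded linear operators; and the key identity $L_t(v\circ F_r)=L_{t-r}v$ is used only for $r\le t$, which is precisely the range of integration.
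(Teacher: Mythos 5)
Your proposal is correct and follows essentially the same route as the paper: part (i) is the same pointwise estimate using $|v_t(x)-v_s(x)|\le|v|_\infty|t-s|$ and the a.e.\ Lipschitz dependence of $t\mapsto F_tx$ together with $\chi\in C^\eta$, and part (ii) is the same transfer-operator computation via $\E(\cdot\,|F_t^{-1}\cB)=U_tL_t$, $L_tU_r=L_{t-r}$ for $r\le t$, and the splitting $\chi=\int_0^t L_sv\,ds+\int_t^\infty L_sv\,ds$ (the paper just computes $L_tm_t=0$ first and then applies $U_t$, which is the same calculation organized slightly differently). Your extra justifications of the standard identities and of the interchange of $L_t$ with the Bochner integrals are welcome but do not change the argument.
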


\begin{proof}
(i) For $0\le s\le t\le 1$ and $x\in\Lambda$,
\begin{align*}
|m_s(x)-m_t(x)| & \le |v_s(x)-v_t(x)|+|\chi(F_sx)-\chi(F_tx)|
\\ & \le |s-t||v|_\infty+|\chi|_\eta\, d(F_sx,F_tx)^\eta.
\end{align*}
Since $t\mapsto F_t$ is a.e.\ Lipschitz, it follows that
$t\mapsto m_t$ is a.e.\ $C^\eta$.

\vspace{1ex}
\noindent (ii) Let $U_tv=v\circ F_t$, and recall that $L_tU_t=I$ and
$\E(\cdot|F_t^{-1}\cB)=U_tL_t$.  Then
\begin{align*}
L_tm_t & = \SMALL  L_t(v_t-U_t\chi+\chi)
=\int_0^t L_tU_rv\,dr - \chi + \int_0^\infty L_tL_r v\,dr 
\\[.75ex] &  \SMALL =\int_0^t L_{t-r}v\,dr - \chi + \int_0^\infty L_{t+r} v\,dr 
=\int_0^t L_rv\,dr - \chi + \int_t^\infty L_r v\,dr =0.
\end{align*}
Hence $\E(m_t|F_t^{-1}\cB)=U_tL_tm_t=0$.
\end{proof}

\begin{pfof}{Theorem~\ref{thm:anti}}
Fix $T>0$, and define 
\[
M_T(t)=m_T-m_{T-t}=m_t\circ F_{T-t},\quad t\in[0,T].
\]
Also, define the filtration $\cG_{T,t}=F_{T-t}^{-1}\cB$.
It is immediate that
$M_T(t)= m_t\circ F_{T-t}$ is $\cG_{T,t}$-measurable.
Also, for $s<t$ we have $M_T(t)-M_T(s)=m_{T-s}-m_{T-t}=m_{t-s}\circ F_{T-t}$,
so
\[
\E(M_T(t)-M_T(s)|\cG_{T,s})=
\E(m_{t-s}\circ F_{T-t}|F_{T-s}^{-1}\cB)
=\E(m_{t-s}|F_{t-s}^{-1}\cB)\circ F_{T-t}=0
\]
by Proposition~\ref{prop:anti}(ii). 
Hence $M_T$ is a martingale for each $T>0$.
Next, 
\[
|M_T(t)|_\infty
=|m_t\circ F_{T-t}|_\infty
\le |m_t|_\infty \le |v_t|_\infty+2|\chi|_\infty
\le T|v|_\infty+2|\chi|_\infty.
\] 
Hence $M_T(t),\,t\in[0,T]$, is a bounded martingale.

By Proposition~\ref{prop:anti}(i), $M_T$ has $C^\eta$ sample paths.
Since $\eta>\frac12$, it follows from general martingale theory that $M_T\equiv 0$ a.e.  
Taking $t=T$, we obtain $m_T=0$ a.e.
Hence $v_T=\chi\circ F_T-\chi$ a.e.\ for all $T>0$ as required.

For completeness, we include the argument that $M_T\equiv0$ a.e.
We require two standard properties of the quadratic variation process $t\mapsto [M_T](t)$; a reference for these is~\cite[Theorem~4.1]{ChungWilliams}.  First, $[M_T](t)$ is the limit in probability 
as $n\to\infty$ of 
\[
S_n(t)=\sum_{j=1}^n(M_T(jt/n)-M_T((j-1)t/n))^2.
\]
Second (noting that $M_T(0)=0$),
\[
[M_T](t)=M_T(t)^2-2\int_0^t M_T\,dM_T,
\]
where the stochastic integral has expectation zero.
In particular, $\E([M_T])\equiv \E(M_T^2)$.

 Since $M_T$ has H\"older sample paths with exponent $\eta>\frac12$, we have a.e.\ that
\[
|S_n(t)|=O(t^\eta n^{-(2\eta-1)})\to0 \quad\text{as $n\to\infty$.}
\]
Hence $[M_T]\equiv0$ a.e.  It follows that
$\E(M_T^2)\equiv0$ and so $M_T\equiv0$ a.e.
\end{pfof}

\end{document}